\newtheorem{theo}{{Theorem}}[section]
\newtheorem{cor}[theo]{{Corollary}}
\newtheorem{lemma}[theo]{{Lemma}}
\newtheorem{pro}[theo]{Proposition}
\theoremstyle{remark}
\newtheorem{defn}[theo]{Definition}
\theoremstyle{remark}
\newtheorem*{remark}{Remark}
\newcommand{\ra}{\rightarrow}
\newcommand{\Lra}{\Longleftrightarrow}
\newcommand{\sto}[1][]{\stackrel{#1}{\to}}
\newcommand{\cM}{\mathcal{M}}
\newcommand{\F}{\mathbb F}
\newcommand{\N}{\mathbb N}
\newcommand{\Q}{\mathbb Q}
\newcommand{\R}{\mathbb R}
\newcommand{\Z}{\mathbb Z}
\DeclareMathOperator{\diag}{\mathrm{diag}}
\DeclareMathOperator{\ord}{\mathrm ord}
\DeclareMathOperator{\rank}{\mathrm rank}
\DeclareMathOperator{\tor}{\mathrm tor}
  \newcommand{\textcyr}[1]{%
    {\fontencoding{OT2}\fontfamily{wncyr}\fontseries{m}\fontshape{n}%
     \selectfont #1}}
\newcommand{\Sha}{{\mbox{\textcyr{Sh}}}}
\begin{document}
\title[On non-congruent numbers]{On non-congruent numbers \\ with $1$ modulo $4$ prime factors}
\author{Yi Ouyang and  Shenxing Zhang}
\address{Wu Wen-Tsun Key Laboratory of Mathematics,  School of Mathematical Sciences, University of Science and Technology of China, Hefei, Anhui 230026, China}

\email{yiouyang@ustc.edu.cn}
\email{zsxqq@mail.ustc.edu.cn}
\thanks{Research partially
supported by Project 11171317 from NSFC}
\date{\today}
\subjclass[2010]{Primary 11G05; Secondary 11D25}
\maketitle

\begin{abstract}
In this paper, we use the $2$-decent method to find a series of odd
non-congruent numbers $\equiv1\pmod 8$ whose prime factors are $\equiv1\pmod4$ such
that the congruent elliptic curves have second lowest Selmer
groups, which includes Li and Tian's result \cite{LT} as special cases.
\end{abstract}

\section{Introduction}
The congruent number problem is about when a positive integer can be
the area of a rational right triangle. A positive integer $n$ is a
non-congruent number is equivalent to that the congruent elliptic
curve
  \begin{equation} E:=E^{(n)}: y^2=x^3-n^2x \end{equation}
has Mordell-Weil rank zero. In \cite{Fe1} and \cite{Fe2}, Feng
obtained several series of non-congruent numbers for  $E^{(n)}$ with
the  lowest Selmer groups. In \cite{LT}, Li and Tian obtained a
series of non-congruent numbers whose prime factors are $\equiv1
\pmod 8$ such that  $E^{(n)}$ has second lowest Selmer groups. The
essential tool of the above results is the $2$-descend method of
elliptic curves. In this paper, we will use this method to get a
series of odd non-congruent numbers whose prime factors are $\equiv1
\pmod 4$ such that  $E^{(n)}$ has second lowest Selmer groups, which
includes Li and Tian's result as special cases.

Suppose $n$ is a square-free integer such that $n=p_1\cdots
p_k\equiv1\pmod 8$ and primes $p_i\equiv 1\pmod 4$, then by
quadratic reciprocity law
$\left(\frac{p_i}{p_j}\right)=\left(\frac{p_j}{p_i}\right)$.

\begin{defn}
Suppose $n=p_1\cdots p_k\equiv1\pmod 8$ and $p_i\equiv 1\pmod 4$.
The graph $G(n):=(V,A)$ associated to $n$ is a simple undirected
graph  with vertex set $V:=\{\textrm{prime}\ p\mid n\}$ and edge set
$A:=\{\overline{p q}: \left(\frac{p}{q}\right)=-1\}$.
\end{defn}

Recall for a simple undirected graph $G=(V,A)$, a partition
$V=V_0\cup V_1$ is called \emph{even} if
for any $v\in V_i$ ($ i=0,1$), $\#\{v\ra V_{1-i}\}$ is even. $G$ is called
an \emph{odd graph} if the only  even partition is the trivial
partition $V=\emptyset\cup V$. Then our main result is:
\begin{theo}\label{maintheo}
Suppose $n=p_1\cdots p_k\equiv1\pmod 8$ and $p_i\equiv 1\pmod 4$. If
the graph $G(n)$ is odd  and
$\delta(n)$ (as given by \eqref{eq:dn}) is $1$, then for the congruent
elliptic curve $E=E^{(n)}$,
 \[ \rank_\Z(E(\Q))=0\ \text{and}\  \Sha(E/\Q)[2^\infty]\cong (\Z/2\Z)^2. \]
As a consequence, $n$ is a non-congruent number.
\end{theo}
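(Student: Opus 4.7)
The plan is to carry out a complete $2$-descent on $E = E^{(n)}$ via the degree-$2$ isogeny $\phi : E \to E'$ with kernel generated by $(0,0)$, where $E' : Y^2 = X^3 + 4n^2 X$, together with its dual $\hat\phi : E' \to E$. The objective is to compute the $\F_2$-dimensions of the two Selmer groups $S^{(\phi)}(E/\Q)$ and $S^{(\hat\phi)}(E'/\Q)$ exactly, and then to feed these into the standard descent exact sequences to read off both $\rank_\Z E(\Q)$ and $\Sha(E/\Q)[2]$.

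Each Selmer group embeds into the subgroup of $\Q^*/\Q^{*2}$ generated by $-1, 2, p_1, \ldots, p_k$, with membership cut out by local solvability of the associated principal homogeneous spaces at the primes $\{\infty, 2, p_1, \ldots, p_k\}$. For $S^{(\phi)}(E/\Q)$ the local condition at each $p_i$ becomes an $\F_2$-linear equation in the exponents $e_1, \ldots, e_k$ whose off-diagonal coefficients are the Legendre symbols $\left(\frac{p_j}{p_i}\right)$. By quadratic reciprocity (valid since $p_i \equiv 1 \pmod 4$) these symbols are symmetric in $i,j$ and record exactly the adjacency of $G(n)$. The oddness hypothesis on $G(n)$ is precisely the statement that this matrix has trivial kernel modulo the subspace coming from rational $2$-torsion, which forces $\#S^{(\phi)}(E/\Q)$ to attain its second-lowest possible value.

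The computation of $S^{(\hat\phi)}(E'/\Q)$ is analogous, but the local conditions at $2$ and $\infty$ change because they come from a different isogeny and torsor. The invariant $\delta(n)$ defined by \eqref{eq:dn} is designed to encode precisely this twisted obstruction, and $\delta(n) = 1$ forces $\#S^{(\hat\phi)}(E'/\Q)$ to also attain the second-lowest possible value. The descent exact sequence
\[ 0 \to E'(\Q)/\phi E(\Q) \to S^{(\phi)}(E/\Q) \to \Sha(E/\Q)[\phi] \to 0 \]
and its analogue for $\hat\phi$, combined with $|E(\Q)[2]| = 4$ and the standard product formula relating the two Selmer sizes to the rank, then yield $\rank_\Z E(\Q) = 0$ together with $\Sha(E/\Q)[2] \cong (\Z/2\Z)^2$.

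To upgrade to $\Sha[2^\infty]$, I would invoke the Cassels--Tate pairing, which is alternating and non-degenerate on $\Sha/\Sha_{\mathrm{div}}$. Since $\rank_\Z E(\Q) = 0$, the $2$-primary part of $\Sha$ is finite (divisible part trivial), so it suffices to show the pairing is non-degenerate on the four-element group $\Sha(E/\Q)[2]$; this forces $2\Sha[4] = 0$ and hence $\Sha[2^\infty] = \Sha[2] \cong (\Z/2\Z)^2$. I expect the principal obstacle to lie in the second and third paragraphs: correctly tracking local solvability at $p_i \equiv 1 \pmod 4$ (where $2$ need not be a square mod $p_i$, unlike the $p_i \equiv 1 \pmod 8$ setting of \cite{LT}) and isolating $\delta(n)$ as the decisive local invariant at $2$. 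A secondary technicality is verifying non-degeneracy of the Cassels pairing on the four-element $\Sha[2]$, which typically requires a concrete local pairing computation rather than a purely formal argument.
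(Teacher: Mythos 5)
Your plan stops at the first $2$-descent, and that step cannot yield the theorem: under the hypothesis that $G(n)$ is odd, the two Selmer groups are $S^{(\varphi)}(E/\Q)=\{1,2d,2n/d,n\}$ and $S^{(\psi)}(E'/\Q)=\{\pm1,\pm n\}$ (Corollary~\ref{cor:d}), both of order $4$, and since $\#E(\Q)[2]=4$ the product formula only gives $2^{\rank_\Z E(\Q)+2}\le \#S^{(\varphi)}(E/\Q)\cdot\#S^{(\psi)}(E'/\Q)=16$, i.e.\ $\rank_\Z E(\Q)\le 2$. No refinement of the local conditions can improve this, because the Selmer groups genuinely have these orders; this is exactly what ``second lowest Selmer group'' means, and it is why Li--Tian and this paper need more than a first descent. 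You have also misidentified the role of $\delta(n)$: it does not enter the computation of $S^{(\psi)}(E'/\Q)$ at all (that group is already the minimal one, the image of $E(\Q)_{\tor}$, once $G(n)$ is odd). In the paper, $\delta(n)$ is a \emph{second-descent} invariant: one must compute $\tilde S^{(\varphi)}(E/\Q)$, the image of the full $2$-Selmer group $S^{(\varphi\psi)}(E'/\Q)$ inside $S^{(\varphi)}(E/\Q)$, by testing local solvability of the auxiliary homogeneous spaces $\cM_b$ of \eqref{eq:mb} (Lemma~\ref{lemma:descent}, going back to Birch--Swinnerton-Dyer). Proposition~\ref{prop:dn} converts this into an $\F_2$-linear system $Lc'=t'$ in the Laplace matrix, which by Theorem~\ref{graphthm}(3) is solvable only if $\sum_j t'_j=0$; the condition $\delta(n)=\sum_j t'_j=1$ therefore excludes $2d$ from $\tilde S^{(\varphi)}(E/\Q)$, and the parity of $\dim S^{(\varphi)}-\dim\tilde S^{(\varphi)}$ forced by Cassels' skew-symmetric pairing then gives $\tilde S^{(\varphi)}(E/\Q)=\{1\}$. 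Since $E'(\Q)/\varphi E(\Q)\subseteq\tilde S^{(\varphi)}(E/\Q)$, this is what produces $\#E'(\Q)/\varphi E(\Q)=1$ and hence rank $0$. This entire layer --- the spaces $\cM_b$, the local analysis at $p\mid d$ and $p\mid n/d$, and the parity argument --- is absent from your proposal.

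Your route to $\Sha(E/\Q)[2^\infty]$ also does not go through as stated. The Cassels--Tate pairing is non-degenerate on $\Sha/\Sha_{\mathrm{div}}$, not on the subgroup $\Sha[2]$: if, say, $\Sha(E/\Q)[2^\infty]\cong(\Z/4\Z)^2$ with a non-degenerate alternating pairing, then the restriction to $\Sha[2]=2\Sha[4]$ is identically zero. So ``non-degeneracy on the four-element group $\Sha[2]$'' is essentially equivalent to the assertion $\Sha[2^\infty]=\Sha[2]$ you want, and your plan gives no independent way to establish it. The paper avoids this circularity by working on the other curve: from $\#S^{(\psi)}(E'/\Q)=\#E(\Q)/\psi E'(\Q)=4$ and \eqref{eq:fun} it gets $\Sha(E'/\Q)[\psi]=0$, from \eqref{eq:4} and $\tilde S^{(\varphi)}(E/\Q)=\{1\}$ it gets $\Sha(E'/\Q)[2]=0$, hence $\Sha(E'/\Q)[2^\infty]=0$, and then the exact sequence $0\to\Sha(E/\Q)[\varphi]\to\Sha(E/\Q)[2^k]\to\Sha(E'/\Q)[2^{k-1}\psi]$ pins down $\Sha(E/\Q)[2^k]\cong\Sha(E/\Q)[\varphi]\cong(\Z/2\Z)^2$ for every $k$. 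The parts of your proposal that do match the paper --- encoding the local conditions at the $p_i$ in the Laplace matrix of $G(n)$ via quadratic reciprocity, and oddness of $G(n)$ pinning down the first Selmer groups --- are correct, but they constitute only the first half of the argument.
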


The following Corollary is Li and Tian's result, cf. \cite{LT}:
\begin{cor} \label{coro:lt}
Suppose $n=p_1\cdots p_k$ and $p_i\equiv 1\pmod 8$. If the graph
$G(n)$ is odd and the Jacobi symbol $\left(\frac{1+\sqrt{-1}}{n}\right)=-1$, then for
$E=E^{(n)}$,
 \[ \rank_\Z(E(\Q))=0\ \text{and}\  \Sha(E/\Q)[2^\infty]\cong (\Z/2\Z)^2. \]
As a consequence, $n$ is a non-congruent number.
\end{cor}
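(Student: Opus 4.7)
The plan is to deduce Corollary \ref{coro:lt} from Theorem \ref{maintheo} by verifying each of its four hypotheses in the stronger setting where every prime divisor of $n$ is congruent to $1\pmod 8$.

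Three of the conditions are immediate. The stronger congruence $p_i\equiv 1\pmod 8$ trivially implies $p_i\equiv 1\pmod 4$, so the assumption of Theorem \ref{maintheo} on the prime factors holds. Taking the product, $n=p_1\cdots p_k\equiv 1\pmod 8$. Finally, the oddness of $G(n)$ is part of the hypothesis of the Corollary. Thus the whole content of the reduction lies in showing that, under $p_i\equiv 1\pmod 8$, the condition $\delta(n)=1$ appearing in Theorem \ref{maintheo} is equivalent to the biquadratic condition $\left(\frac{1+\sqrt{-1}}{n}\right)=-1$.

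To establish this equivalence I would unpack the definition in \eqref{eq:dn} and express $\delta(n)$ as a product of local symbols, one at each $p_i$, arising from the $2$-descent. When $p_i\equiv 1\pmod 8$, the prime $p_i$ splits completely in $\Z[\sqrt{-1}]$ as $p_i=\pi_i\overline{\pi_i}$ and $2$ is already a square modulo $p_i$, so the local symbol at $p_i$ detecting $\delta(n)$ can be rewritten as a biquadratic residue symbol of $1+\sqrt{-1}$ modulo $\pi_i$. The multiplicativity of the Jacobi symbol in $\Z[\sqrt{-1}]$ then collects these local factors into the global symbol $\left(\frac{1+\sqrt{-1}}{n}\right)$, and matching the overall sign conventions yields the desired equivalence.

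The main obstacle is this last identification: it is the honest computation, since in the generality of Theorem \ref{maintheo} the symbol $\delta(n)$ must account for the finer information $p_i\equiv 1\pmod 4$ versus $p_i\equiv 1\pmod 8$, and specializing it carefully requires the precise form of \eqref{eq:dn}. Once the local formulas are extracted and re-expressed as biquadratic residue symbols, however, the equivalence is a direct bookkeeping exercise, and the Corollary follows at once from Theorem \ref{maintheo}.
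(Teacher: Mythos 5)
Your overall strategy --- reduce to Theorem~\ref{maintheo} and show that when every $p_i\equiv 1\pmod 8$ the condition $\delta(n)=1$ becomes $\left(\frac{1+\sqrt{-1}}{n}\right)=-1$ --- is exactly the paper's, but your sketch of that equivalence has a genuine gap: you never determine the distinguished divisor $d$ that enters the definition of $\delta(n)$. By Definition~\ref{defn:dn}, $\delta(n)$ is built from the unique factor $0<d<\sqrt{2n}$ of Corollary~\ref{cor:d} (the one with $2d\in S^{(\varphi)}(E/\Q)$) together with a representation $2d=\tau^2+\mu^2$; until $d$, $\tau$, $\mu$ are pinned down, the ``local symbols at each $p_i$'' you propose to extract are not even well-defined quantities, and no amount of biquadratic-residue bookkeeping can begin. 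The missing (and essentially only) idea is: since every $p_i\equiv 1\pmod 8$, one has $\left[\frac{2}{p_i}\right]=0$ for all $i$, so by Proposition~\ref{lem:6.3}(1-b) the $\F_2$-vector $c$ encoding $d$ satisfies $L(G)c=0$; oddness of $G(n)$ (Theorem~\ref{graphthm}) forces the trivial partition, hence $d=1$.

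Once $d=1$ is known, the computation you defer as ``the honest computation'' evaporates: $2d=2=1^2+1^2$ gives $\tau=\mu=1$, the term $\left[\frac{2}{d}\right]$ vanishes, and the residue $i$ of Definition~\ref{defn:dn} is just a square root of $-1$ modulo $n$, so \eqref{eq:dn} reads $\delta(n)=\left[\frac{1+\sqrt{-1}}{n}\right]$ verbatim; thus $\delta(n)=1$ if and only if $\left(\frac{1+\sqrt{-1}}{n}\right)=-1$, which is precisely the paper's one-line proof. No splitting of the $p_i$ in $\Z[\sqrt{-1}]$ and no biquadratic residue symbols are needed --- that machinery belongs to Li and Tian's original argument, not to evaluating the paper's $\delta(n)$. (The only place the hypothesis $p_i\equiv 1\pmod 8$ rather than $1\pmod 4$ intervenes beyond forcing $d=1$ is in making the symbol $\left(\frac{1+\sqrt{-1}}{p_i}\right)$ independent of the choice of square root of $-1$, via $(1+i_p)(1-i_p)=2$ and $\left(\frac{2}{p_i}\right)=1$.)
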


\subsection*{Acknowledgement.} This  paper  was prepared when the authors were visiting the Academy of Mathematics and Systems Science and the Morningside Center of Mathematics of Chinese Academy of Sciences, and was grew out of a project proposed by Professor Ye Tian to the second author. We would like to thank Professor Ye Tian for his vision, insistence  and  generous hospitality. We also would like to thank Jie Shu and Jinbang Yang for many helpful discussions.

\section{Review of $2$-descent method.} \label{sec:descent}
In this section, we recall the $2$-descent method of computing the
Selmer groups of elliptic curves. This section follows \cite{LT} pp 232-233,
also cf. \cite{BSD} \S 5 and \cite{Si1} X.4.

For an isogeny $\varphi: E\ra E'$ of elliptic curves defined over a number field $K$,  one has the following fundamental exact sequence
  \begin{equation} \label{eq:fun} 0\ra E'(K)/\varphi E(K)\ra S^{(\varphi)}(E/K)\ra \Sha(E/K)[\varphi]\ra 0. \end{equation}
Moreover, if $\psi: E'\ra E$ is another isogeny, for the composition $\psi\circ\varphi: E\ra E$, then
the following diagram of exact sequences commutes (cf. \cite{XZ} p 5):
  \begin{equation*}\label{eqn1}\xymatrix{
           &0\ar@{.>}^{\iota_1}[d]                                    &0\ar@{.>}^{\iota_2}[d]                             &0                           \ar[d]& \\
    0\ar[r]&E'(K)/\varphi E(K)   \ar[r]\ar[d]^{\psi} & S^{(\varphi)}(E/K)    \ar[r]\ar[d]&\Sha(E/K)[\varphi]    \ar[r]\ar[d]&0\\
    0\ar[r]&E(K)/\psi\varphi E(K)\ar[r]\ar[d]        & S^{(\psi\varphi)}(E/K)\ar[r]\ar[d]&\Sha(E/K)[\psi\varphi]\ar[r]\ar[d]&0\\
    0\ar[r]&E(K)/\psi E'(K)      \ar[r]\ar[d]        & S^{(\psi)}(E'/K)       \ar[r]      &\Sha(E'/K)[\psi]      \ar[r]      &0\\
           &0                                        &                                 &                                  &
  }\end{equation*}
Now suppose $n$ is a fixed odd positive square-free
integer, $K=\Q$, and $E/\Q$, $E'/\Q$, $\varphi$, $\psi=\varphi^{\vee}$ are given by
  \[E=E^{(n)}: y^2=x^3-n^2x,\quad E'=\widehat{E^{(n)}}: y^2=x^3+4n^2x,\]
  \[\varphi: E\ra E',\ (x,y)\mapsto(\frac{y^2}{x^2},\frac{y(x^2+n^2)}{x^2}),\]
  \[\psi: E'\ra E,\ (x,y)\mapsto(\frac{y^2}{4x^2},\frac{y(x^2-4n^2)}{8x^2}).\]
Then $\varphi\psi=[2], \psi\varphi=[2]$.  In this case $\iota_1$ and $\iota_2$ are exact.
Let $\tilde S^{(\psi)}(E'/\Q)$ denote the image of
$S^{(\psi\varphi)}(E/\Q)$ in $S^{(\psi)}(E'/\Q)$. Then
  \[\# \Sha(E/\Q)[\varphi]=\frac{\#S^{(\varphi)}(E/\Q)}{\#E'(\Q)/\varphi E(\Q)},\quad
    \# \Sha(E'/\Q)[\psi]=\frac{\#S^{(\psi)}(E'/\Q)}{\#E(\Q)/\psi E'(\Q)},\]
and
 \begin{equation} \# \Sha(E/\Q)[2]=\frac{\#S^{(\varphi)}(E/\Q)\cdot \#\tilde S^{(\psi)}(E'/\Q)}{\#E'(\Q)/\varphi E(\Q)\cdot \#E(\Q)/\psi E'(\Q)}. \end{equation}
Similarly, 
 \begin{equation} \label{eq:4} \# \Sha(E'/\Q)[2]=\frac{\#S^{(\psi)}(E'/\Q)\cdot \#\tilde S^{(\varphi)}(E/\Q)}{\#E(\Q)/\psi E'(\Q)\cdot \#E'(\Q)/\varphi E(\Q)}.\end{equation}

The $2$-descent method to compute the Selmer groups
$S^{(\varphi)}(E/\Q)$ and $S^{(\psi)}(E'/\Q)$ is as follows (cf. \cite{Si1} for general elliptic curves). Let
  \[S=\{\textrm{prime factors of}\ 2n\}\cup\{\infty\}, \]
  \[\Q(S,2)=\{b\in\Q^\times/{\Q^{\times 2}}: 2\mid \ord_p(b),\forall p\not\in S\}.\]
Note that $\Q(S,2)$ is represented by factors of $2n$ and we identify these two sets. By the exact sequence
  \[ 0\ra E'(\Q)/\varphi E(\Q)\sto[i] \Q(S,2)\sto[j]  WC(E/\Q)[\varphi],\]
where
  \[ \begin{split} & i:\  (x,y)\mapsto x, \  O\mapsto 1,\  (0,0)\mapsto 4n^2,\qquad
   j:   d\mapsto\{C_d/\Q\}\end{split}\]
and $C_d/\Q$ is the homogeneous space for $E/\Q$ defined by the equation
  \begin{equation}\label{eq:Cd} C_d: dw^2=d^2+4n^2z^4, \end{equation}
the $\varphi$-Selmer group $S^{(\varphi)}(E/\Q)$ is then
  \begin{equation} S^{(\varphi)}(E/\Q)\cong\{d\in \Q(S,2) : C_d(\Q_p)\neq\emptyset,\ \forall p\in S\}. \end{equation}
Similarly, suppose
    \begin{equation}\label{eq:Cd1} C'_d: dw^2=d^2- n^2z^4.\end{equation}
The $\psi$-Selmer group $S^{(\psi)}(E'/\Q)$ is then
  \begin{equation} S^{(\psi)}(E'/\Q)\cong\{d\in \Q(S,2) : C'_d(\Q_p)\neq\emptyset,\ \forall p\in S\}. \end{equation}

The method to compute $\tilde S^{(\varphi)}(E/\Q)$ follows from \cite{BSD} \S 5, Lemma 10:
\begin{lemma} \label{lemma:descent}
Let $d\in S^{(\varphi)}(E/\Q)$. Suppose  $(\sigma, \tau,\mu)$ is a nonzero
integer solution of  $d\sigma^2=d^2
\tau^2+4n^2\mu^2$. Let $\cM_b$ be the curve corresponding to
$b\in\Q^\times/{\Q^\times}^2$ given by
  \begin{equation} \label{eq:mb} \cM_b:\ dw^2=d^2t^4+4n^2z^4,\ \ d\sigma w-d^2\tau t^2-4n^2\mu z^2=bu^2.\end{equation}
Then $d\in\tilde S^{(\varphi)}(E/\Q)$ if and only if there exists
$b\in\Q(S,2)$ such that $\cM_b$ is locally solvable everywhere.
\end{lemma}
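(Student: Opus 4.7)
The plan is to identify $\cM_b$ explicitly as the homogeneous space for $E/\Q$ representing a lift of the $\varphi$-Selmer class $d$ to the full $[2]$-Selmer group $S^{(\psi\varphi)}(E/\Q)$. If this geometric identification is correct, the condition $d\in\tilde S^{(\varphi)}(E/\Q)$ translates directly into the local solvability of some $\cM_b$, which is the statement of the lemma.

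The first step is to verify the fundamental algebraic identity on $C_d$. Using the quartic relation $dw^2=d^2t^4+4n^2z^4$ together with the hypothesis $d\sigma^2=d^2\tau^2+4n^2\mu^2$, a direct expansion gives
\begin{equation*}
(d\sigma w-d^2\tau t^2-4n^2\mu z^2)(d\sigma w+d^2\tau t^2+4n^2\mu z^2)=\bigl[2dn(\mu t^2-\tau z^2)\bigr]^2
\end{equation*}
on $C_d$. Setting $L_\pm:=d\sigma w\pm(d^2\tau t^2+4n^2\mu z^2)$, the product $L_+L_-$ is therefore a perfect square on $C_d$, so at every point $L_+$ and $L_-$ represent the same class in $\Q^\times/\Q^{\times 2}$. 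Consequently the equation $L_-=bu^2$ cuts out an unramified double cover $\cM_b\to C_d$, equivalent to the cover defined by $L_+=bv^2$.

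Next I would match this double cover with the second stage of the $2$-descent. For $P\in E(\Q_p)$ with $\varphi(P)=(x',y')\in E'(\Q_p)$, the class of $x'$ modulo squares is $d$, and a local point on $C_d$ attached to $\varphi(P)$ produces $(w,t,z)$ with $dw^2=d^2t^4+4n^2z^4$. An explicit substitution using the isogeny formulas for $\psi$ should show that the image of $P$ in $E(\Q_p)/\psi E'(\Q_p)$, i.e.\ the class of the $x$-coordinate of $P$ modulo squares, equals the value of $L_-$ at $(w,t,z)$. Hence a local lift of $d$ to $S^{(\psi\varphi)}(E/\Q_p)$ is represented by a $b\in\Q_p^\times/\Q_p^{\times 2}$ for which $\cM_b(\Q_p)\ne\emptyset$; the square-class conditions at primes outside $S$ force $b\in\Q(S,2)$ globally, and conversely any such $b$ making $\cM_b$ everywhere locally solvable yields a lift of $d$ to $S^{(\psi\varphi)}(E/\Q)$ whose image in $S^{(\varphi)}(E/\Q)$ is $d$.

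The main obstacle will be verifying that $\cM_b$ genuinely represents a class in $S^{(\psi\varphi)}(E/\Q)$ mapping to $d$ under the connecting map in the commutative diagram, and that every lift of $d$ arises in this form for a suitable $b$. The algebraic identity above supplies the geometry---identifying $\cM_b\to C_d$ with the pullback of $\psi:E'\to E$ along the natural double cover $C_d\to E'$---but matching the Galois-cohomological description implicit in the diagram with this explicit homogeneous-space description requires careful tracking of coboundary maps. This is the core content of \cite{BSD} \S 5 Lemma 10, whose argument we would follow for the remaining bookkeeping.
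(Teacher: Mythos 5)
Your overall route is the correct one, and it is essentially the only one available for comparison: the paper does not prove Lemma~\ref{lemma:descent} at all, but quotes it from \cite{BSD} \S 5, Lemma 10, adding only that $(\sigma,\tau,\mu)$ exists by Hasse--Minkowski. Your key identity is also correct: writing $L_{\pm}=d\sigma w\pm(d^2\tau t^2+4n^2\mu z^2)$, one has $L_+L_-=\bigl[2dn(\mu t^2-\tau z^2)\bigr]^2$ on $C_d$, so $L_-=bu^2$ does cut out the family of unramified double covers of $C_d$, and the reduction of $b$ to $\Q(S,2)$ is standard. Up to this point your sketch agrees with the argument the paper is invoking.

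The genuine error is in the cohomological identification, where the roles of $E$ and $E'$ are reversed. By the paper's definition and the symmetry of its commutative diagram, $\tilde S^{(\varphi)}(E/\Q)$ is the image of $S^{(\varphi\psi)}(E'/\Q)$ --- the $2$-Selmer group of $E'$ --- in $S^{(\varphi)}(E/\Q)$, under the pushforward induced by $\psi\colon E'[2]\twoheadrightarrow E[\varphi]$. Accordingly $\cM_b$ must be exhibited as a $2$-covering of $E'$, i.e.\ a torsor under $E'$ (not ``the homogeneous space for $E/\Q$''), refining the $\varphi$-covering $C_d\ra E'$, $(w,t,z)\mapsto(dt^2/z^2,\,dtw/z^3)$; the double cover $\cM_b\ra C_d$ is a twisted form of $\psi\colon E'\ra E$, not a pullback of it. Your version, which translates $d\in\tilde S^{(\varphi)}(E/\Q)$ into lifting $d$ to $S^{(\psi\varphi)}(E/\Q)$ (the $2$-Selmer group of $E$), cannot be what the lemma asserts: there is no natural map $S^{(\psi\varphi)}(E/\Q)\ra S^{(\varphi)}(E/\Q)$, and the only available interpretation, via the exact column $0\ra S^{(\varphi)}(E/\Q)\sto[\iota_2] S^{(\psi\varphi)}(E/\Q)$ of the diagram, makes the condition vacuous, forcing $\tilde S^{(\varphi)}(E/\Q)=S^{(\varphi)}(E/\Q)$ identically and contradicting Corollary~\ref{cor:image} and the main theorem. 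The same reversal breaks your local-matching step: for $P\in E(\Q_p)$ the point $\varphi(P)$ is by definition trivial in $E'(\Q_p)/\varphi E(\Q_p)$, so demanding that its $x$-coordinate have class $d$ forces $d\in\Q_p^{\times2}$; what is true is that points of $C_d(\Q_p)$ map to points of $E'(\Q_p)$ whose $x$-coordinate has class $d$. Finally, note that the lemma is irreducibly global: since $2E'(\Q_p)\subseteq\varphi E(\Q_p)$, the local maps $E'(\Q_p)/2E'(\Q_p)\ra E'(\Q_p)/\varphi E(\Q_p)$ are surjective, so prime-by-prime liftability is automatic, and the actual content is the existence of a single global $b\in\Q(S,2)$ making every $\cM_b(\Q_p)$ nonempty at once. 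Once the orientation is corrected, the remaining bookkeeping is, as you say, exactly \cite{BSD} \S 5, Lemma 10.
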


Note that the existence of $\sigma, \tau,\mu$ follows from
Hasse-Minkowski theorem (cf. \cite{Se}).

\section{Local computation}
We need a modification of the Legendre symbol. For $x\in \Q_p$ or
$\in \Q$ such that $\ord_p(x)$ is even, we set
 \begin{equation} \label{eq:legendre} \left (\frac{x}{p}\right):=\left (\frac{xp^{-\ord_p(x)}}{p}\right). \end{equation}
Thus $(\frac{\ }{p})$ defines a homomorphism from $\{x\in
\Q^\times/\Q^{\times 2} : \ord_p(x)\ \textrm{is even}\}$ to $\{\pm
1\}$.

\subsection{Computation of Selmer groups}
In this  subsection, we will find the conditions when $C_d$ or
$C_d'$ is locally solvable. We will not give details since one only
need to consider the valuations and quadratic residue.

\begin{lemma}\label{lem:oddphi}
$d\in S^{(\varphi)}(E/\Q)$ if and only if $d$ satisfies
\begin{enumerate}
 \item $d>0$ has no prime factor $p \equiv3\pmod 4$;
\item $\left(\frac{n/d}{p}\right)=1$ for all odd $p\mid d$; \item $\left(\frac{d}{p}\right)=1$ for all odd $p\mid (2n/d)$; \item if $2\mid d$,  $n\equiv\pm1\pmod8$.
\end{enumerate}
\end{lemma}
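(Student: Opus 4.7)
The plan is to identify $d\in\Q(S,2)$ with a squarefree integer dividing $\pm 2n$ and test local solvability of the quartic covering $C_d\colon dw^2=d^2+4n^2z^4$ at each place of $S$; the four conditions of the lemma will correspond to the archimedean place, odd $p\mid d$, odd $p\mid 2n/d$, and $p=2$ respectively. The archimedean check is immediate: since $d^2+4n^2z^4>0$ for every real $z$, one needs $d>0$, and conversely $(w,z)=(\sqrt d,0)$ works. Under our hypothesis that every $p_i\equiv 1\pmod 4$ the ``no prime factor $\equiv 3\pmod 4$'' clause of (1) is automatic, but retaining it lets the necessity argument at $p\mid d$ below apply without that assumption.

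For an odd prime $p\mid d$ one has $p\mid n$ as well; write $d=pe$ and $n=pm$. A valuation argument forces $p\mid w$ in any $\Q_p$-point, and substituting $w=pw_1$ reduces the equation to $epw_1^2=e^2+4m^2z^4$. Reduction mod $p$ gives $(2mz^2)^2\equiv -e^2\pmod p$, solvable exactly when $-1$ is a square mod $p$, i.e.\ $p\equiv 1\pmod 4$. Fixing $\zeta\in\F_p$ with $\zeta^2=-1$, full solvability (with $z$ a unit) then demands that $\zeta e/(2m)$ be a square mod $p$. A short computation shows $\zeta/2$ is itself always a square mod $p$ when $p\equiv 1\pmod 4$ (the cases $p\equiv 1$ and $p\equiv 5\pmod 8$ both work out), so the residual condition collapses to $\left(\frac{e}{p}\right)\left(\frac{m}{p}\right)=1$, which in the convention of \eqref{eq:legendre} is precisely $\left(\frac{n/d}{p}\right)=1$, i.e.\ condition (2). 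For an odd $p\mid 2n/d$ with $p\nmid d$, the candidate $(w,z)=(\sqrt d,0)$ lies in $\Q_p$ iff $d$ is a square in $\Q_p^\times$, i.e.\ $\left(\frac{d}{p}\right)=1$; reducing the equation mod $p^2$ shows this is also necessary, giving (3).

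The remaining case $p=2$ is the most delicate. When $d$ is odd, all prime factors of $d$ are $\equiv 1\pmod 4$, so $d\equiv 1\pmod 4$; a direct mod-$8$ check then shows $z=0$ (when $d\equiv 1\pmod 8$) or $z=1$ (when $d\equiv 5\pmod 8$, where $(d^2+4n^2)/d\equiv 1\pmod 8$) yields a $\Q_2$-solution via Hensel, with no further constraint on $n$. When $2\mid d$, write $d=2e$ with $e\mid n$ odd; a valuation argument forces $2\mid w$, and after substitution one reaches $2ew_1^2=e^2+n^2z^4$. Analysing mod $16$ (one first rules out $z$ or $w_1$ even, then compares both sides) one finds local solvability equivalent to $n\equiv\pm 1\pmod 8$, yielding (4). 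The main obstacle is exactly this $2$-adic bookkeeping, since Hensel at $p=2$ requires mod-$8$ agreement for a quadratic equation and several sub-cases must be separated; a secondary subtlety is the clean translation of the $p\mid d$ computation into $\left(\frac{n/d}{p}\right)=1$, which hinges on the ``$\zeta/2$ is a square mod $p$'' identity noted above.
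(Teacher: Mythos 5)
Your proposal is correct and follows essentially the same route as the paper: both test local solvability of $C_d$ place by place (at $\infty$, at odd $p\mid d$, at odd $p\mid 2n/d$, and at $p=2$, splitting into $d$ odd/even) using valuation arguments and quadratic residues, the paper merely recording the resulting local conditions without computation. One small caveat: for $2\mid d$ the full $2$-adic condition is $d\equiv 2\pmod 8$ \emph{and} $n\equiv\pm1\pmod 8$ (the paper states both), while you report only the latter; the omitted clause is exactly what condition (1) supplies, so the equivalence you prove is unaffected.
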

\begin{proof} In this case $C_d: dw^2=d^2t^4+4n^2z^4$.
It is obvious that $C_d(\R)\neq\emptyset\Leftrightarrow d>0$. Assume $d>0$.

(i) If $2\nmid d\mid n$, then $C_d: w^2=d(t^4+4(n/d)^2z^4)$.
\begin{itemize}
 \item $p=2$. $C_d(\Q_2)\neq\emptyset\Lra d\equiv1\pmod 4$.
 \item $p\mid d$. $C_d(\Q_p)\neq\emptyset\Lra \left(\frac{n/d}{p}\right)=1$ and $p\equiv1\pmod 4$.
 \item $p\nmid d$. $C_d(\Q_p)\neq\emptyset\Lra \left(\frac{d}{p}\right)=1$.
\end{itemize}

(ii) If $2\mid d\mid 2n$, then $C_d: w^2=d(t^4+(2n/d)^2z^4)$.
\begin{itemize}
 \item $p=2$. $C_d(\Q_2)\neq\emptyset\Lra d\equiv2\pmod 8,\ n\equiv\pm1\pmod 8$.
 \item $2\neq p\mid d$. $C_d(\Q_p)\neq\emptyset\Lra \left(\frac{n/d}{p}\right)=1$ and $p\equiv1\pmod 4$.
 \item $p\nmid d$. $C_d(\Q_p)\neq\emptyset\Lra \left(\frac{d}{p}\right)=1$.
\end{itemize}
Combining (i) and (ii) follows the lemma.
\end{proof}

\begin{lemma}\label{lem:oddpsi}
$d\in S^{(\psi)}(E'/\Q)$ if and only if $d$ satisfies
\begin{enumerate} \item $d\equiv\pm1\pmod 8$ or $n/d \equiv \pm1 \pmod 8$
\item $\left(\frac{n/d}{p}\right)=1$  for all $p\mid d, p\equiv 1\pmod 4$;
 \item $\left(\frac{d}{p}\right)=1$ for all $p\mid (n/d), p\equiv 1\pmod 4$.
\end{enumerate}
\end{lemma}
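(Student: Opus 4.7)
The plan is to analyze the local solvability of $C'_d : dw^2 = d^2 - n^2 z^4$ at each place of $S$, in parallel with the treatment of Lemma~\ref{lem:oddphi}. A short $2$-adic valuation argument first rules out even $d$: if $d = 2d_1$, then $v_2$ of the two sides cannot be matched for any $z \in \Q_2$, so $d$ must be odd. Setting $e := n/d$, a coprime odd integer, I rewrite the equation as $w^2 = d(1 - e^2z^4) = d(1-ez^2)(1+ez^2)$ and parametrize solutions by the $p$-adic valuation of $z$. The archimedean case imposes nothing: take $(w,z) = (\sqrt{d}, 0)$ if $d > 0$, and $|z|$ large if $d < 0$.

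For an odd prime $p \mid d$, writing $d = p\tilde d$ gives $w^2 = p\tilde d(1-e^2z^4)$, and parity of $v_p(w^2)$ forces $ez^2 \equiv \pm 1 \pmod p$; such $z$ exists iff $\pm e$ is a quadratic residue mod $p$. If $p \equiv 3 \pmod 4$ then $-1$ is a non-residue, so exactly one of $\pm e$ is a QR and the condition is automatic; if $p \equiv 1 \pmod 4$ we recover $\left(\frac{e}{p}\right) = 1$, i.e., condition~(2). A Hensel lift, using the freedom to vary $z$ within its residue class mod $p$, supplies an actual $\Q_p$-point. For odd $p \mid e$ the roles of $d$ and $e$ swap: $v_p(z) \geq 0$ gives $w^2 \equiv d \pmod p$ (requiring $d$ a QR), while $z = p^{-k}z'$ with $k \geq 1$ gives unit part $\equiv -d \pmod p$ (requiring $-d$ a QR). The disjunction is automatic for $p \equiv 3 \pmod 4$, and reduces to $\left(\frac{d}{p}\right) = 1$ for $p \equiv 1 \pmod 4$, which is condition~(3).

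The main obstacle is the prime $2$, where I enumerate $z \in \Q_2$ by $v_2(z)$. (i) Taking $z = 0$ gives $w^2 = d$, solvable iff $d \equiv 1 \pmod 8$. (ii) If $e \equiv \pm 1 \pmod 8$ then $\pm e^{-1}$ is a $\Q_2$-square, so one can arrange $ez^2 = \pm 1$ and take $w = 0$. (iii) For $z = 2^{-k}z'$ with $k \geq 1$ and $z'$ an odd unit, the identity $1 - e^2z^4 = 2^{-4k}\bigl(2^{4k} - e^2(z')^4\bigr)$ shows the unit part of $d(1-e^2z^4)$ is $\equiv -d \pmod 8$, giving a solution precisely when $d \equiv -1 \pmod 8$. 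For the converse, when none of $d, -d, e, -e$ is $\equiv 1 \pmod 8$, a case split on $v_2(z)$ (odd $z$ with $e \equiv \pm 3 \pmod 8$ forces $v_2(1-e^2z^4) = 3$, hence odd valuation, while even $z$ reduces to case (i)) shows $d(1-e^2z^4)$ is never a square in $\Q_2$. Combining (i)–(iii) yields condition~(1); intersecting with the conditions at odd primes finishes the proof.
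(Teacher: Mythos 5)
Your proof is correct and takes essentially the same approach as the paper: a place-by-place analysis of $C'_d$, ruling out even $d$ by a $2$-adic valuation count, reducing each odd $p\mid n$ to the condition that $d$ or $-d$ (resp.\ $n/d$ or $-n/d$) be a quadratic residue — automatic when $p\equiv3\pmod 4$ — and a case split on $v_2(z)$ at $p=2$ giving condition (1). The paper's proof simply lists these local equivalences without justification, and your write-up supplies exactly the valuation/Hensel details it omits.
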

\begin{proof} In the case $C'_d: dw^2=d^2t^4-n^2z^4$.

(i) If $2\mid d$, consider the $2$-valuation of each side, we see $C'_d(\Q_2)=\emptyset$.

(ii) If $2\nmid d\mid n$, then $C'_d: w^2=d(t^4-(n/d)^2z^4)$.
\begin{itemize}
 \item $p=2$. $C'_d(\Q_2)\neq\emptyset \Lra d\equiv\pm1\pmod 8$ or $n/d\equiv\pm1\pmod 8$.
 \item $p\mid d$. $C'_d(\Q_p)\neq\emptyset\Lra \left(\frac{n/d}{p}\right)=1$ or $\left(\frac{-n/d}{p}\right)=1$.
 \item $p\nmid d$. $C'_d(\Q_p)\neq\emptyset\Lra \left(\frac{d}{p}\right)=1$ or $\left(\frac{-d}{p}\right)=1$.
\end{itemize}
Combining (i) and (ii) follows the lemma.
\end{proof}

\subsection{Computation of the images of Selmer groups}
Suppose $0<2d\in S^{(\varphi)}(E/\Q)$, $d$ is odd with no $\equiv
3\pmod4$ prime factor, we want to find a necessary condition for
$2d\in \tilde S^{(\varphi)}(E/\Q)$. Write $2d=\tau^2+\mu^2$ and
select the triple $(\sigma,\tau,\mu)$ in Lemma~\ref{lemma:descent}
to be $(2n, n\tau/d, \mu)$. Then the defining equations of
$\cM_{4ndb}$  in \eqref{eq:mb} can be written as
 \begin{equation} \label{eq:mb1} w^2=2d(t^4+(n/d)^2 z^4),\quad w-\tau t^2-(n/d)\mu z^2 =b u^2.
 \end{equation}
By abuse of notations, we denote the above curve by $\cM_b$.
We use the notation $O(p^m)$ to denote a number with $p$-adic valuation $\geq m$.

\vskip 0.3cm \noindent \textbf{The case $p\mid d$.} For $i_p
\equiv\tau/\mu\pmod{p\Z_p}$, $i_p\in\Z_p$ and $i_p ^2=-1$, then
  \[p\mid (\tau-i_p \mu),\quad p\nmid (\tau+i_p \mu). \]
It's easy to see $v(t)=v(z)$, we may assume that $z=1,\ t^2\equiv
\pm \frac{i_p n}{d}\pmod p$, then $\cM_b$ is given by
  \[\cM_b:     w^2=2d(t^4+(n/d)^2),\quad
    w-\tau t^2-(n/d)\mu=bu^2. \]

(i) If $v(bu^2)=m\geq3$, then by $w^2=(\tau t^2+\frac{n\mu}{d}+O(p^m))^2=2d(t^4+\frac{n^2}{d^2})$,
  \[ \Bigl(\mu t^2-\frac{n\tau}{d}\Bigr)^2=O(p^m).\]
Let $t^2=\frac{n\tau}{d\mu}+\beta$, where
$v(\beta)=\alpha\geq\frac{m}{2}$, then
  \[\begin{split}
    w^2=&2d\left((\frac{n}{d})^2+(\frac{n\tau}{d\mu})^2+2\frac{n\tau}{d\mu}\beta+\beta^2\right)\\
    =&\frac{4n^2}{\mu^2}(1+\frac{\tau\mu}{n}\beta+\frac{d\mu^2}{2n^2}\beta^2),
  \end{split}\]
 Take the square root on both sides,  then
  \[\begin{split}
    w=&\pm\frac{2n}{\mu}\left(1+\frac{1}{2}(\frac{\tau\mu}{n}\beta+\frac{d\mu^2}{2n^2}\beta^2)-\frac{1}{8}(\frac{\tau\mu}{n}\beta)^2+O(p^{3\alpha-3})\right)\\
     =&\pm\left(\frac{2n}{\mu}+\tau\beta+n\mu(\frac{\mu\beta}{2n})^2+O(p^{3\alpha-2})\right),\end{split}\]
 but on the other hand,
 \[ w    =\tau t^2+\frac{n\mu}{d}+bu^2     =\frac{2n}{\mu}+\tau\beta+bu^2. \]
The sign must be positive and
  \[bu^2=n\mu(\frac{\mu\beta}{2n})^2+O(p^{3\alpha-2}),\]
thus $p\mid b$, $\left(\frac{b/p}{p}\right)=\left(\frac{n\mu/p}{p}\right)$,
$\left(\frac{n/b}{p}\right)=\left(\frac{\mu}{p}\right)=\left(\frac{2\tau}{p}\right)$.

(ii) If $v(bu^2)=m\leq2$ and $t^2\equiv \frac{i_p n}{d} \pmod p$, let
$t^2=\frac{i_p n}{d} +p\alpha i_p $, then
  \[    w^2=2d\cdot p\alpha i_p \cdot \Bigl( \frac{2i_p n}{d} +p\alpha i_p \Bigr)
       =-4p^2\cdot\frac{n\alpha}{p}\Bigl(1+\frac{pd\alpha}{2n}\Bigr), \]
and
 \[\begin{split} w_1=& \frac{w}{p}=\pm 2i_p \sqrt{\frac{n\alpha}{p}}\Bigl(1+\frac{pd\alpha}{4n}+O(p^2)\Bigr),\\
 bu^2    =&w-\tau t^2-\frac{n\mu}{d}\\
    =&\pm 2pi_p \sqrt{\frac{n\alpha}{p}}\Bigl(1+\frac{pd\alpha}{4n}\Bigr)-\frac{i_p \tau n}{d}-\frac{n\mu}{d}-\tau\alpha i_p p+O(p^3)\\
    =&-\frac{p^2 i_p \tau}{n}\Bigl(\sqrt{\frac{n\alpha}{p}}\mp\frac{n}{p\tau}\Bigr)^2-
        \frac{ni_p }{2d\tau}(\tau-i_p \mu)^2 \pm 2p^2i_p \sqrt{\frac{n\alpha}{p}}\frac{d\alpha}{4n}+O(p^3).
  \end{split}\]
If $v(bu^2)=2$, then $\sqrt{\frac{n\alpha}{p}}\equiv \pm
\frac{n}{p\tau}\pmod p$, and
  \[\begin{split}
    bu^2=&-\frac{ni_p }{2d\tau}(\tau-i_p \mu)^2\pm2p^2i_p \sqrt{\frac{n\alpha}{p}}\frac{d\alpha}{4n}+O(p^3)\\
        =&\frac{-ni_p (\tau-i_p \mu)^3(3\tau+i_p \mu)}{8d\tau^3}+O(p^3)\\
        =&\frac{-ni_p (\tau-i_p \mu)^3}{2d\tau^2}+O(p^3)=O(p^3),
  \end{split}\]
which is impossible! Thus $v(bu^2)=1$ and  $p\mid b$,
  \[\left(\frac{b/p}{p}\right)=\left(\frac{-pi_p \tau/n}{p}\right)=\left(\frac{2p\tau/n}{p}\right),
    \ \text{or}\  \left(\frac{n/b}{p}\right)=\left(\frac{2\tau}{p}\right).\]

(iii) If $v(bu^2)=m\leq2$ and $t^2\equiv -i_p (n/d)\pmod p$, then
  \[\begin{split}
    bu^2=&w-\tau t^2-(n/d)\mu
    =(\tau i_p -\mu)n/d+O(p)\\
    =&2i_p \tau n/d+O(p)
    =(1+i_p )^2\cdot\frac{n}{d}\cdot \tau+O(p),
  \end{split}\]
thus $p\nmid b$ and
$\left(\frac{b}{p}\right)=\left(\frac{\tau}{p}\right)\left(\frac{n/d}{p}\right).$

Note that $2\tau\equiv\tau+\mu i_p\pmod p$ and
$\left(\frac{2n/d}{p}\right)=1$, hence we have
\begin{lemma}\label{lem:image1}
The curve $\cM_b$ defined by \eqref{eq:mb1} is locally solvable at $p\mid d$ if and only if
  \[ \textrm{either}\ \ p\mid  b,\ \left(\frac{n/b}{p}\right)=\left(\frac{\tau+\mu i_p}{p}\right); \quad
  \textrm{or}\ \  p\nmid b, \ \left(\frac{b}{p}\right)=\left(\frac{\tau+\mu i_p}{p}\right).\]
\end{lemma}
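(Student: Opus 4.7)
The strategy is to analyze local solvability of $\cM_b$ at a prime $p\mid d$ by splitting according to the $p$-adic valuation $m=v(bu^2)$. Since $p\mid d$ and $p\equiv 1\pmod 4$, I would fix $i_p\in\Z_p$ with $i_p^2=-1$ and $i_p\equiv \tau/\mu\pmod p$; this is well-defined because $\tau^2+\mu^2=2d\equiv 0\pmod p$. The factorization $2d=(\tau-i_p\mu)(\tau+i_p\mu)$ then forces $p\mid (\tau-i_p\mu)$ and $p\nmid (\tau+i_p\mu)$, so $\tau+i_p\mu\equiv 2\tau\pmod p$, and in particular $\left(\frac{\tau+i_p\mu}{p}\right)=\left(\frac{2\tau}{p}\right)$.

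From $w^2=2d(t^4+(n/d)^2z^4)$ one sees $v(t)=v(z)$, so we may normalize $z=1$, whence $t^2\equiv \pm i_p n/d\pmod p$. I would then run three subcases: (a) $m\geq 3$; (b) $m\leq 2$ with $t^2\equiv +i_p n/d\pmod p$; (c) $m\leq 2$ with $t^2\equiv -i_p n/d\pmod p$. In (a) and (b), one expands $w$ via the binomial series $\pm(2n/\mu)\sqrt{1+\cdots}$ and matches against $\tau t^2+n\mu/d+bu^2$; this forces $p\mid b$ with $\left(\frac{n/b}{p}\right)=\left(\frac{2\tau}{p}\right)$, and in (b) the finer value $m=1$ is forced because $m=2$ would yield $bu^2=O(p^3)$, a contradiction. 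In (c), the leading obstructions cancel, giving $p\nmid b$ and $\left(\frac{b}{p}\right)=\left(\frac{\tau(n/d)}{p}\right)$. Conversely, reversing these Hensel-type expansions constructs an actual $\Q_p$-point from the numerical data, so each subcase is an ``if and only if''.

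To unify the three subcases into the single statement of the lemma, I would invoke two facts: first, the congruence $\tau+i_p\mu\equiv 2\tau\pmod p$ established above; second, $\left(\frac{2n/d}{p}\right)=1$ for every $p\mid d$, which follows from $2d\in S^{(\varphi)}(E/\Q)$ via Lemma~\ref{lem:oddphi}(ii) applied to the divisor $2d$ of $2n$. Together these rewrite the condition of (c) as $\left(\frac{b}{p}\right)=\left(\frac{2\tau}{p}\right)=\left(\frac{\tau+i_p\mu}{p}\right)$, matching the ``$p\nmid b$'' branch of the lemma, while (a) and (b) directly yield the ``$p\mid b$'' branch. The principal technical obstacle is subcase (b): ruling out $m=2$ requires expanding $w$ to second order in an auxiliary variable $\beta$ with $v(\beta)\geq m/2$ and spotting a hidden cancellation, via the identity $(\tau-i_p\mu)^3(3\tau+i_p\mu)\equiv 4\tau^2(\tau-i_p\mu)^3\pmod p$ together with the fact that $(\tau-i_p\mu)^3$ itself sits in $p^3\Z_p$. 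Carrying the expansion through to $O(p^3)$ is the only place where the argument goes beyond routine Hensel lifting; once the factorization $2d=(\tau-i_p\mu)(\tau+i_p\mu)$ is exploited, the remaining bookkeeping is straightforward.
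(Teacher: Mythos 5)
Your proposal is correct and follows essentially the same route as the paper's own computation: the same normalization ($z=1$, $t^2\equiv\pm i_p n/d\pmod p$), the same three-case split on $m=v(bu^2)$ with $m=2$ excluded by the hidden $O(p^3)$ cancellation, and the same unification of the three outcomes via $\tau+\mu i_p\equiv 2\tau\pmod p$ and $\left(\frac{2n/d}{p}\right)=1$ (which, as you say, comes from $2d\in S^{(\varphi)}(E/\Q)$ and Lemma~\ref{lem:oddphi}). Only harmless slips: the relevant congruence is $3\tau+i_p\mu\equiv 4\tau\pmod p$, so the factor is $4\tau(\tau-i_p\mu)^3$ rather than $4\tau^2(\tau-i_p\mu)^3$; the auxiliary $\beta$ with $v(\beta)\geq m/2$ is the device of the case $m\geq3$, the paper instead writing $t^2=i_p n/d+p\alpha i_p$ in your subcase (b); and the sharpness of the $O(p^3)$ bound also uses $v_p(n)=1$ (since $p\mid d\mid n$), which makes the coefficient $n/(2d\tau^2)$ a $p$-adic unit.
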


\vskip 0.3cm
\noindent \textbf{The case $p\mid \frac{n}{d}$.} In this case $t$ is a $p$-adic unit if and only if  $w$ is so.

(i) If $v(w)=v(t)=0$, then $w\equiv \pm\sqrt{2d}t^2\pmod p$ and
$(\pm\sqrt{2d}-\tau)t^2\equiv bu^2\pmod p$. Since
$(\sqrt{2d}-\tau)(\sqrt{2d}+\tau)=2d-\tau^2=\mu^2$ and $\sqrt{2d}\pm
\tau$ are co-prime, $\ord_p(\sqrt{2d}-\tau)$ is even and
$\left(\frac{\sqrt{2d}-\tau}{p}\right)$ is well defined. Then
$\cM_b$ is locally solvable if and only if
  \[p\nmid b, \left(\frac{2d}{p}\right)=1\ \textrm{and}\ \left(\frac{b}{p}\right)=\left(\frac{\sqrt{2d}-\tau}{p}\right).\]

(ii) If $v(z)=0$ and $w=pw_1, t=pt_1$, then
$w_1^2=2d(p^2t_1^2+(\frac{n}{pb})^2z^4)$, $w_1\equiv
\pm\sqrt{2d}\frac{n}{pd}z^2\pmod p$ and $bu^2/p\equiv
(\pm\sqrt{2d}-\mu)\frac{n}{pd}z^2\pmod p$. Thus $\cM_b$ is locally
solvable if and only if
  \[p\mid b, \left(\frac{2d}{p}\right)=1\ \textrm{and}
    \ \left(\frac{n/(db)}{p}\right)=\left(\frac{\sqrt{2d}-\mu}{p}\right).\]

Note that
 \[ 2(\sqrt{2d}-\tau)(\sqrt{2d}-\mu)=(\tau+\mu-\sqrt{2d})^2\Rightarrow
 \left(\frac{\sqrt{2d}-\mu}{p}\right)=\left(\frac{2(\sqrt{2d}-\tau)}{p}\right). \]

From now on, suppose $n=p_1\cdots p_k\equiv1\pmod 8$ and $p_i\equiv
1\pmod 4$. Pick $i_p\in \Z_p$ such that $i_p^2=-1$, then
  \[ \sqrt{2d}-\tau
  =-(\tau+\mu i_p)\cdot\frac{1}{2}\Bigl(1-\frac{\sqrt{2d}}{\tau+\mu i_p}\Bigr)^2. \]
Note that $\left(\frac{2d}{p}\right)=1$, we have
\begin{lemma}\label{lem:image2}
$\cM_b$ defined by \eqref{eq:mb1} is locally solvable at $p\mid \frac{n}{d}$ if and only if
  \[\begin{split}
    p\mid  b,&\quad \left(\frac{2d}{p}\right)=1\ \textrm{and}\ \left(\frac{n/b}{p}\right)=\left(\frac{\tau+\mu i_p}{p}\right)\left(\frac{2}{p}\right),\\
    \textrm{or}\  p\nmid b,&\quad \left(\frac{2d}{p}\right)=1\ \textrm{and}\ \left(\frac{b}{p}\right)=\left(\frac{\tau+\mu i_p}{p}\right)\left(\frac{2}{p}\right).\\
  \end{split}\]
\end{lemma}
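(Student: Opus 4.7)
The plan is to analyze local solvability of $\cM_b$ at a prime $p\mid n/d$ by splitting on the $p$-adic valuations of $(w,t,z)$ in the system \eqref{eq:mb1}. Since $n$ is square-free and $p\mid n/d$, we have $p\nmid d$, so the first equation $w^2=2d(t^4+(n/d)^2z^4)$ forces either $v(w)=v(t)=0$ (and $z$ arbitrary), or $v(z)=0$ with $p\mid w$ and $p\mid t$. These two cases are treated in detail immediately before the lemma and yield the intermediate conditions: in case (i), $p\nmid b$ with $\left(\frac{2d}{p}\right)=1$ and $\left(\frac{b}{p}\right)=\left(\frac{\sqrt{2d}-\tau}{p}\right)$; in case (ii), $p\mid b$ with $\left(\frac{2d}{p}\right)=1$ and $\left(\frac{n/(db)}{p}\right)=\left(\frac{\sqrt{2d}-\mu}{p}\right)$.

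Before using these, I would verify that the two Legendre-like symbols on the right are well-defined. From $(\sqrt{2d}-\tau)(\sqrt{2d}+\tau)=\mu^2$ and the fact that $\sqrt{2d}\pm\tau$ are coprime in $\Z_p$, $\ord_p(\sqrt{2d}-\tau)$ is even, so $\left(\frac{\sqrt{2d}-\tau}{p}\right)$ makes sense in the sense of \eqref{eq:legendre}; a symmetric argument handles $\sqrt{2d}-\mu$.

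The remaining and main step is to rewrite both conditions uniformly in terms of $\tau+\mu i_p$. For this I would use the two algebraic identities quoted in the excerpt: identity (a), $2(\sqrt{2d}-\tau)(\sqrt{2d}-\mu)=(\tau+\mu-\sqrt{2d})^2$, giving $\left(\frac{\sqrt{2d}-\mu}{p}\right)=\left(\frac{2(\sqrt{2d}-\tau)}{p}\right)$; and identity (b), $\sqrt{2d}-\tau=-(\tau+\mu i_p)\cdot\tfrac{1}{2}\bigl(1-\sqrt{2d}/(\tau+\mu i_p)\bigr)^2$, giving $\left(\frac{\sqrt{2d}-\tau}{p}\right)=\left(\frac{-(\tau+\mu i_p)/2}{p}\right)$. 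Since $p\equiv1\pmod 4$, $-1$ is a square in $\Q_p$, and $\left(\frac{1/2}{p}\right)=\left(\frac{2}{p}\right)$; so identity (b) collapses to $\left(\frac{\sqrt{2d}-\tau}{p}\right)=\left(\frac{\tau+\mu i_p}{p}\right)\left(\frac{2}{p}\right)$, which is precisely the $p\nmid b$ statement of the lemma.

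For the $p\mid b$ branch, I pass from $\left(\frac{n/(db)}{p}\right)$ to $\left(\frac{n/b}{p}\right)$ by multiplying by $\left(\frac{d}{p}\right)$, noting that $\left(\frac{2d}{p}\right)=1$ forces $\left(\frac{d}{p}\right)=\left(\frac{2}{p}\right)$; combining with identity (a) the extra factor $\left(\frac{2}{p}\right)^2=1$ disappears, and applying identity (b) produces the same symbol $\left(\frac{\tau+\mu i_p}{p}\right)\left(\frac{2}{p}\right)$ on the right. The main obstacle is really only the sign/factor-of-$2$ bookkeeping through identities (a) and (b); the case split and the $p$-adic reductions themselves are routine Hensel-type calculations already executed in the preceding paragraphs.
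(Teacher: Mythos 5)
Your proposal is correct and follows essentially the same route as the paper: the same valuation-based case split into $v(w)=v(t)=0$ versus $p\mid w,t$ with $v(z)=0$, the same intermediate conditions in terms of $\left(\frac{\sqrt{2d}-\tau}{p}\right)$ and $\left(\frac{\sqrt{2d}-\mu}{p}\right)$, and the same two identities to convert everything to the symbol $\left(\frac{\tau+\mu i_p}{p}\right)\left(\frac{2}{p}\right)$. Your explicit bookkeeping (using $p\equiv 1\pmod 4$ so $\left(\frac{-1}{p}\right)=1$, and $\left(\frac{d}{p}\right)=\left(\frac{2}{p}\right)$ from $\left(\frac{2d}{p}\right)=1$) is exactly what the paper leaves implicit between its displayed identities and the statement of the lemma.
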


By Lemmas~\ref{lemma:descent},~\ref{lem:oddphi},~\ref{lem:image1}
and \ref{lem:image2}, and  we have
\begin{pro}\label{prop:dn} Suppose $n=p_1\cdots p_k\equiv1\pmod 8$ and $p_i\equiv
1\pmod 4$, then $2d\in S^{(\varphi)}(E/\Q)$ if and only if $d>0$ and
$\left(\frac{2n/d}{p}\right)=1$ for $p\mid d$,
$\left(\frac{2d}{p}\right)=1$ for $p\mid \frac{n}{d}$. In this case $2d\in
\tilde{S}^{(\varphi)}(E/\Q)$ only if there exists $b\in \Q(S,2)$
satisfying:

(1) If $p\mid d$, $i_p\equiv \tau/\mu\pmod{p\Z_p}$, $i_p^2=-1$,
  \[  p\mid  b,\ \left(\frac{n/b}{p}\right)=\left(\frac{\tau+\mu i_p}{p}\right),\quad
  \textrm{or}\quad  p\nmid b,\quad \left(\frac{b}{p}\right)=\left(\frac{\tau+\mu i_p}{p}\right).\]

(2) If $p\mid \frac{n}{d}$,   $i_p^2=-1$,
  \[  p\mid  b,\ \left(\frac{n/b}{p}\right)=\left(\frac{2(\tau+\mu i_p)}{p}\right),\quad
  \textrm{or}\quad  p\nmid b,\quad \left(\frac{b}{p}\right)=\left(\frac{2(\tau+\mu i_p)}{p}\right).\]
\end{pro}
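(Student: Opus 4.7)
The plan is to combine the four results immediately preceding the proposition: the $\varphi$-Selmer characterization in Lemma~\ref{lem:oddphi}, the descent Lemma~\ref{lemma:descent}, and the two local-solvability computations Lemmas~\ref{lem:image1}--\ref{lem:image2}. The whole proof is essentially a bookkeeping step, the main care being consistency of the auxiliary variable ``$d$'' across the various lemmas.

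For the first assertion I would specialize Lemma~\ref{lem:oddphi} to ``$d := 2d$'' in the lemma's notation, where $d$ is the odd integer of the proposition. Since every odd prime of $2d$ divides $n$ and is $\equiv 1 \pmod 4$, hypothesis (1) of Lemma~\ref{lem:oddphi} collapses to $d > 0$, and hypothesis (4) is automatic from $n \equiv 1 \pmod 8$. The remaining conditions (2) and (3) read $\left(\frac{n/(2d)}{p}\right) = 1$ for odd $p \mid d$ and $\left(\frac{2d}{p}\right) = 1$ for odd $p \mid n/d$; the former simplifies to $\left(\frac{2n/d}{p}\right) = 1$ using $\left(\frac{1/2}{p}\right) = \left(\frac{2}{p}\right)$, which is precisely the stated criterion.

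For the second assertion I would apply Lemma~\ref{lemma:descent}, again with the auxiliary variable of that lemma taken to be $2d$. Since $d > 0$ and every prime factor of $d$ is $\equiv 1 \pmod 4$, the two-squares theorem produces integers $\tau_0, \mu_0$ with $\tau_0^2 + \mu_0^2 = 2d$, and the triple $(\sigma, \tau, \mu) = (2n, n\tau_0/d, \mu_0)$ verifies the identity $(2d)\sigma^2 = (2d)^2 \tau^2 + 4n^2 \mu^2$ required by the lemma. Substituting this triple into \eqref{eq:mb} and absorbing the harmless constant factor $4nd$ into the parameter $b$ transforms $\cM_b$ into the normalised curve \eqref{eq:mb1} that Lemmas~\ref{lem:image1}--\ref{lem:image2} already analyze. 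Consequently $2d \in \tilde{S}^{(\varphi)}(E/\Q)$ produces some $b \in \Q(S,2)$ for which \eqref{eq:mb1} is locally solvable everywhere; specializing to primes $p \mid d$ yields condition (1) via Lemma~\ref{lem:image1}, and specializing to primes $p \mid n/d$ yields condition (2) via Lemma~\ref{lem:image2}. The $\left(\frac{2d}{p}\right) = 1$ clause appearing in Lemma~\ref{lem:image2} is already subsumed in the first-part characterization of $S^{(\varphi)}(E/\Q)$.

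Because the statement is only ``only if,'' I need not examine local solvability at $p = 2$, at $p = \infty$, or at primes outside $\{p_i\}$: those places would merely add further necessary conditions on $b$ and are deliberately suppressed here. The only real obstacle is keeping track that the ``$\tau, \mu$'' appearing in Lemmas~\ref{lem:image1}--\ref{lem:image2} are literally the sum-of-squares decomposition $\tau_0^2 + \mu_0^2 = 2d$, so that the quantity $\tau + \mu i_p \in \Z_p$ appearing in conditions (1) and (2) is unambiguously defined; apart from that, the argument is a direct assembly of the earlier lemmas.
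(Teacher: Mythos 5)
Your proposal is correct and follows essentially the same route as the paper: the paper also obtains the first assertion by specializing Lemma~\ref{lem:oddphi} to $2d$, writes $2d=\tau^2+\mu^2$, chooses the triple $(2n,\, n\tau/d,\, \mu)$ in Lemma~\ref{lemma:descent}, absorbs the factor $4nd$ into $b$ to reduce \eqref{eq:mb} to \eqref{eq:mb1}, and then reads off conditions (1) and (2) from Lemmas~\ref{lem:image1} and~\ref{lem:image2}. Your added remarks (the identity check for the triple, the bijectivity of absorbing $4nd$ into $b\in\Q(S,2)$, and why only the necessary conditions at $p\mid n$ are needed for an ``only if'' statement) are exactly the bookkeeping the paper leaves implicit.
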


\section{Proof of the main result}
\subsection{Some facts about graph theory.}
We now recall some notations and results in graph theory, cf. \cite{Fe1, Fe2}.

\begin{defn}\label{matofgraph}
Let $G=(V,A)$ be a simple undirected graph. Suppose $\# V=k$.  The
\emph{adjacency matrix} $M(G)=(a_{ij})$ of $G$ is the $k\times k$
matrix defined as
  \begin{equation} a_{ij}:=\begin{cases}
    0, &\ \textrm{if}\ \overline{v_i v_j}\not\in A;\\
    1, &\ \textrm{if}\ \overline{v_i v_j}\in A.
  \end{cases}\end{equation}
The \emph{Laplace matrix} $L(G)$ of $G$ is defined as
  \begin{equation} L(G)=\diag\{d_1,\ldots,d_k\}-M(G) \end{equation}
where $d_i$ is the degree of $v_i$.
\end{defn}

\begin{theo}\label{graphthm}
Let $G$ be a simple undirected graph and $L(G)$ its Laplace matrix. \\
\indent (1) The number of even partitions of $V$ is $2^{k-1-r}$, where $r=\rank_{\F_2} L(G)$.\\
\indent (2) The graph $G$ is odd if and only if  $r=k-1$.\\
\indent (3) If $G$ is odd, then the equations
  \[L(G)\left(\begin{smallmatrix}c_1\\ \vdots \\c_k\end{smallmatrix}\right)
  =\left(\begin{smallmatrix}t_1\\ \vdots \\t_k\end{smallmatrix}\right)\]
has solutions if and only if $t_1+\cdots+t_k=0$.
\end{theo}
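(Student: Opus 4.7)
The plan is to translate the notion of even partition into linear algebra over $\F_2$. First, I would encode a partition $V = V_0\cup V_1$ by its indicator vector $v=(v_1,\ldots,v_k)^T\in \F_2^k$, where $v_i=1$ iff the $i$-th vertex lies in $V_1$. A direct case analysis gives $\#\{v_i\ra V_{1-v_i}\}\equiv d_iv_i+\sum_j a_{ij}v_j\pmod 2$ (when $v_i=0$ this is $\sum_j a_{ij}v_j$, and when $v_i=1$ this is $d_i-\sum_j a_{ij}v_j\equiv d_i+\sum_j a_{ij}v_j$). Hence a partition is even iff $L(G)v=0$ in $\F_2^k$, so even partitions correspond to elements of $\ker_{\F_2}L(G)$ modulo the involution $v\mapsto v+\mathbf{1}$ which encodes swapping $V_0$ and $V_1$.

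For (1), note that $\mathbf{1}$ lies in $\ker_{\F_2}L(G)$ (each row of $L(G)$ sums to $d_i-d_i=0$), and the involution $v\mapsto v+\mathbf{1}$ has no fixed points in $\F_2^k$. Hence the number of even partitions equals $\#\ker_{\F_2}L(G)/2=2^{k-r}/2=2^{k-1-r}$. Part (2) is then immediate: $G$ is odd iff the trivial partition $V=\emptyset\cup V$ is the only even one, iff $2^{k-1-r}=1$, iff $r=k-1$.

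For (3), I would exploit the symmetry of $L(G)$: over $\F_2$ with the standard bilinear pairing, $\mathrm{im}\,L(G)=(\ker L(G))^\perp$ because $L(G)=L(G)^T$. When $G$ is odd, $\ker_{\F_2}L(G)=\langle\mathbf{1}\rangle$, whose orthogonal complement is precisely the hyperplane $\{\vec{t}\in\F_2^k : t_1+\cdots+t_k=0\}$. Hence $L(G)\vec{c}=\vec{t}$ is solvable iff $\sum_i t_i=0$.

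No step looks hard; the one point to verify carefully is the first-paragraph dictionary, namely that the two cases $v_i=0$ and $v_i=1$ in the evenness definition collapse to the single linear equation $(L(G)v)_i\equiv 0\pmod 2$. Once this is in place, the rest is routine linear algebra over $\F_2$.
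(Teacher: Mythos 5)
Your proposal is correct and follows essentially the same route as the paper: the same dictionary identifying even partitions with vectors in $\ker_{\F_2}L(G)$ modulo the involution $v\mapsto v+\mathbf{1}$, and the same count $\tfrac{1}{2}\cdot 2^{k-r}=2^{k-1-r}$ for parts (1) and (2). For part (3) you invoke the duality $\mathrm{im}\,L(G)=(\ker L(G))^\perp$ for the symmetric matrix $L(G)$, whereas the paper notes that the $(k-1)$-dimensional image is contained in the hyperplane $x_1+\cdots+x_k=0$ and concludes by comparing dimensions; these are the same linear-algebra fact packaged slightly differently, so the arguments are essentially identical.
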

\begin{proof}
The proof of the first two parts follows from \cite{Fe1}.
We have a bijection
  \[\begin{split} \F_2^k/\{(0,\cdots,0),(1,\cdots,1)\} &\stackrel{\sim}{\longrightarrow} \{\textrm{partitions of $V$}\}\\
  (c_1,\ldots,c_k)&\longmapsto (V_0, V_1)\end{split}\]
where $V_i=\{v_j: c_j=i\ (1\leq j\leq k)\},\ i\in\{0,1\}$.

Regard $L(G)=\diag\{d_1,\ldots,d_k\}-(a_{ij})$ as a matrix over $\F_2$. If
  \[L(G)\left(\begin{smallmatrix}c_1\\ \vdots \\c_k\end{smallmatrix}\right)=\left(\begin{smallmatrix}b_1\\ \vdots \\b_k\end{smallmatrix}\right)\in\F_2^k,\]
then if $v_i\in V_t, t\in\{0,1\}$,
  \[\begin{split}
    b_i&=d_ic_i+\sum_{j=1}^k a_{ij}c_j=\sum_{j=1}^k a_{ij}(c_i+c_j)\\
    &=\sum\limits_{j=1}^k a_{ij}(t+c_j)=\sum\limits_{c_j=1-t}a_{ij}=\#\{v_i\ra V_{1-t}\}\in\F_2.
  \end{split}\]

(1) The number of even partitions is
  \[\frac{1}{2}\#\left\{(c_1,\ldots,c_k)\in\F_2^n : L(G)\left(\begin{smallmatrix}c_1\\ \vdots \\c_k\end{smallmatrix}\right)
  =\left(\begin{smallmatrix}0\\ \vdots \\0\end{smallmatrix}\right)\right\}=2^{k-1-r}.\]

(2) follows from (1) easily.

(3) Since $L$ is of rank $k-1$, the image space of $L$ is of
dimensional $k-1$, but it lies in the hyperplane $x_1+\cdots+x_k=0$,
thus they coincide and the result follows.
\end{proof}

\subsection{Graph $G(n)$ and Selmer groups of $E$ and $E'$.} From now on, we suppose
 \begin{quote} $n=p_1\cdots p_k\equiv1\pmod 8$ and $p_i\equiv 1\pmod 4$. \end{quote}
Recall for an integer $a$ prime to $n$, the Jacobi symbol
$\left(\frac{a}{n}\right)=\prod_{p\mid n} \left(\frac{a}{p}\right)$, which is extended
to a multiplicative homomorphism from $\{a\in
\Q^{\times}/\Q^{\times 2} : \ord_p(a)\ \text{even for } p\mid n\}$
to $\{\pm 1\}$. Set
 \begin{equation} \left[\frac{a}{n}\right]:=\frac{1}{2}\Bigl(1-\left(\frac{a}{n}\right)\Bigr). \end{equation}
The symbol $[\frac{}{n}]$ is an additive homomorphism from
$\{a\in \Q^{\times}/\Q^{\times 2} : \ord_p(a)\ \text{even} \    \text{for} \
p\mid n\}$ to $\F_2$.

By definition, the adjacency matrix $M(G(n))$ has entries $a_{ij}= \left[\frac{p_i}{p_j}\right]$. For $0<d\mid n$, we denote by $\{d,\frac{n}{d}\}$ the partition
$\{p:p\mid d\}\cup\{p:p\mid \frac{n}{d}\}$ of $G(n)$.

The following proposition is a translation of results in Lemma~\ref{lem:oddphi} and Lemma~\ref{lem:oddpsi}:
\begin{pro}\label{lem:6.3}  Given a factor $d$ of $n$.

(1) For the Selmer group $S^{(\varphi)}(E/\Q)$,
\begin{itemize}
 \item[(1-a)] $d\in S^{(\varphi)}(E/\Q)$ if and only if $d>0$ and $\{d, n/d\}$ is an even partition of $G(n)$;
 \item[(1-b)] Suppose
  \[ c_i=\begin{cases} 1,\ &\text{if}\ p_i\mid d,\\ 0,\ &\text{if}\ p_i\mid \frac{n}{d}; \end{cases} \qquad
  t_i=\left[\frac{2}{p_i}\right]. \]
 Then $2d\in S^{(\varphi)}(E/\Q)$ if and only if $d>0$ and
  \[L(G)\left(\begin{smallmatrix}c_1\\ \vdots \\c_k\end{smallmatrix}\right)
  =\left(\begin{smallmatrix}t_1\\ \vdots \\t_k\end{smallmatrix}\right). \]
\end{itemize}

(2) For the Selmer group  $S^{(\psi)}(E'/\Q)$,
\begin{itemize}
 \item[(2-a)]  $d\in S^{(\psi)}(E'/\Q)$ if and only if $d\equiv\pm1\pmod 8$ and $\{d,n/d\}$ is an even partition of $G(n)$;
 \item[(2-b)] $2d\notin S^{(\psi)}(E'/\Q)$.
 \end{itemize}
\end{pro}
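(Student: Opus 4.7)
The proposition is essentially a dictionary between the local conditions of Lemmas~\ref{lem:oddphi} and \ref{lem:oddpsi} and the graph/linear-algebra language of Section~4.1, so my plan is to treat each of the four sub-statements by invoking the appropriate local lemma and rewriting its Legendre-symbol conditions via the additive symbol $[\tfrac{\cdot}{\cdot}]$ and quadratic reciprocity (which applies cleanly because every $p_i \equiv 1 \pmod 4$).

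For (1-a) I apply Lemma~\ref{lem:oddphi} to an odd divisor $d$ of $n$. Conditions (1) and (4) are automatic (no $p_i \equiv 3 \pmod 4$, and $d$ is odd), and the local condition at $2$ forces $d \equiv 1 \pmod 4$, which is automatic since $d$ is a product of primes $\equiv 1 \pmod 4$. The remaining two conditions $(\tfrac{n/d}{p_i})=1$ for $p_i\mid d$ and $(\tfrac{d}{p_i})=1$ for $p_i\mid n/d$ become, under $[\tfrac{\cdot}{n}]$ and quadratic reciprocity, the statements that each vertex in one part of $\{d,n/d\}$ sends an even number of edges to the other part, i.e.\ $\{d,n/d\}$ is even. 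For (1-b) I apply the same Lemma to the element $2d$: the $2$-adic condition gives $2d \equiv 2 \pmod 8$, i.e.\ $d \equiv 1 \pmod 4$, again automatic. Using $\ord_p$-normalized Legendre symbols, the condition at odd $p_i\mid d$ becomes $(\tfrac{2n/d}{p_i})=1$, and at $p_i\mid n/d$ becomes $(\tfrac{2d}{p_i})=1$. Translated additively, these read
\[
\sum_{p_j\in V_{1-c_i}} a_{ij} \;=\; \Bigl[\tfrac{2}{p_i}\Bigr] \quad (i=1,\dots,k),
\]
which, comparing with the computation $b_i=\#\{v_i\to V_{1-t}\}$ carried out in the proof of Theorem~\ref{graphthm}, is precisely $L(G)c=t$.

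For (2-a) I apply Lemma~\ref{lem:oddpsi}. Since $d\equiv 1\pmod 4$ (forced by our hypothesis on the $p_i$), the condition $d\equiv\pm1\pmod 8$ is equivalent to $d\equiv 1\pmod 8$; combined with $n\equiv 1\pmod 8$, this is in turn equivalent to $n/d\equiv 1\pmod 8$, so Lemma~\ref{lem:oddpsi}(1) collapses to the single condition $d\equiv\pm1\pmod 8$ stated in the proposition. Conditions (2) and (3) of Lemma~\ref{lem:oddpsi} apply to \emph{all} prime factors (every $p_i\equiv 1\pmod 4$) and hence, exactly as in (1-a), encode the evenness of $\{d,n/d\}$. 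Finally, (2-b) is immediate from case (i) of the proof of Lemma~\ref{lem:oddpsi}: applied to $2d$, the $2$-adic valuation obstruction gives $C'_{2d}(\Q_2)=\emptyset$, so $2d\notin S^{(\psi)}(E'/\Q)$.

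The arguments are essentially unpacking of definitions, so no single step is a serious obstacle; the main care needed is bookkeeping the two mildly nonstandard conventions simultaneously, namely the extended Legendre symbol \eqref{eq:legendre} (so that $(\tfrac{n/(2d)}{p})$ makes sense and correctly equals $(\tfrac{2}{p})(\tfrac{n/d}{p})$) and the sign/indexing conventions for $V_0,V_1$ versus the entries of $L(G)c$ fixed in the proof of Theorem~\ref{graphthm}, so that the linear system in (1-b) comes out with $t_i=[\tfrac{2}{p_i}]$ rather than its complement.
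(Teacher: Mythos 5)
Your proposal is correct and follows essentially the same route as the paper: the paper also dismisses (1-a), (2-a), (2-b) as direct translations of Lemmas~\ref{lem:oddphi} and \ref{lem:oddpsi}, and proves (1-b) by exactly your computation identifying the $i$-th entry of $L(G)c$ with $\left[\frac{d}{p_i}\right]$ or $\left[\frac{n/d}{p_i}\right]$ (the paper redoes this sum inline, while you cite it from the proof of Theorem~\ref{graphthm}; it is the same calculation). Your extra care with the normalized symbol $\left(\frac{n/(2d)}{p}\right)=\left(\frac{2n/d}{p}\right)$ and the collapse of condition (1) of Lemma~\ref{lem:oddpsi} to $d\equiv\pm1\pmod 8$ just makes explicit what the paper leaves as ``easy.''
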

\begin{proof} One only has to show (1-b), the rest is easy. For any $i$, let $[i]$  be the set of $j$ such that $p_i$ and $p_j$ are both prime divisors of $d$ or $n/d$. Then
 \[ d_i c_i+\sum_{j\neq i} a_{ij} c_j=\sum_{j\neq i} a_{ij}(c_i+c_j)=\sum_{j\notin [i]} a_{ij}=\left[\frac{d}{p_i}\right]\ \text{or}\ \left[\frac{n/d}{p_i}\right]. \]
Then (1-b) follows from Lemma~\ref{lem:oddphi}.
\end{proof}

Applying  Theorem~\ref{graphthm}(3) to Proposition~\ref{lem:6.3}, then we have
\begin{cor} \label{cor:d}
If $G(n)$ is odd, there exists a unique factor $0<d<\sqrt{2n}$ of $n$ such that
  \[ S^{(\varphi)}(E/\Q)=\{1,2d,2n/d,n\}\cong \Z/2\Z\times \Z/2\Z,\]
and
  \[ S^{(\psi)}(E'/\Q)=\{\pm1, \pm n\}\cong \Z/2\Z\times \Z/2\Z. \]
\end{cor}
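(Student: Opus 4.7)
The plan is to read off membership of the two Selmer groups element by element from $\Q(S,2)$, using Proposition~\ref{lem:6.3} together with Theorem~\ref{graphthm} applied to the odd graph $G(n)$. Every class in $\Q(S,2)$ has the form $\pm d$ or $\pm 2d$ with $d$ a positive divisor of $n$, so there are four types to check for each Selmer group.

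For $S^{(\varphi)}(E/\Q)$: By Lemma~\ref{lem:oddphi}(1) membership forces $d>0$, so only the ``$d$'' and ``$2d$'' types occur. For the type $d\mid n$, Proposition~\ref{lem:6.3}(1-a) says $d\in S^{(\varphi)}$ iff $\{d,n/d\}$ is an even partition of $G(n)$. Since $G(n)$ is odd, the only even partitions are $V=\emptyset\cup V$, giving exactly the two elements $d=1$ and $d=n$. For the type $2d$, Proposition~\ref{lem:6.3}(1-b) converts membership into solvability of the linear system $L(G(n))c=t$ over $\F_2$ with $t_i=[2/p_i]$. By Theorem~\ref{graphthm}(3) this system is solvable iff $\sum_i t_i=0$, equivalently $\left(\frac{2}{n}\right)=1$; and this is guaranteed by the hypothesis $n\equiv 1\pmod 8$. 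Since $\rank_{\F_2}L(G(n))=k-1$ by Theorem~\ref{graphthm}(2), the solution set has cardinality $2$, consisting of a vector $c$ and its complement $c+(1,\ldots,1)$, which correspond to the unordered divisor pair $\{d,n/d\}$. Picking the representative $d$ of this pair so that $d<\sqrt{2n}$ (equivalently the smaller of the two), we obtain $S^{(\varphi)}(E/\Q)=\{1,2d,2n/d,n\}$, and this set is closed under multiplication modulo $n$, hence isomorphic to $(\Z/2\Z)^2$.

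For $S^{(\psi)}(E'/\Q)$: Proposition~\ref{lem:6.3}(2-b) eliminates the types $\pm 2d$. For positive $d\mid n$, Proposition~\ref{lem:6.3}(2-a) requires $\{d,n/d\}$ to be an even partition \emph{and} $d\equiv\pm1\pmod 8$; the first condition again forces $d\in\{1,n\}$, and both satisfy the congruence because $n\equiv 1\pmod 8$. To handle the sign, I would go back to Lemma~\ref{lem:oddpsi}: since every prime $p\mid n$ satisfies $p\equiv1\pmod 4$, one has $\left(\frac{-1}{p}\right)=1$, so conditions (ii)--(iii) of that lemma are invariant under $d\mapsto -d$, while condition (i) is manifestly invariant. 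Hence $\pm 1,\pm n\in S^{(\psi)}(E'/\Q)$, with no further contributions, giving the group $\{\pm 1,\pm n\}\cong(\Z/2\Z)^2$.

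The proof is essentially a bookkeeping exercise once Proposition~\ref{lem:6.3} is in hand, and the only nontrivial step is the arithmetic input $\left(\frac{2}{n}\right)=1$ needed to invoke Theorem~\ref{graphthm}(3) for the existence of $d$; this is exactly the content of the assumption $n\equiv 1\pmod 8$ and is what forces the Selmer groups to have rank $2$ rather than rank $1$. A minor technical point to verify is the claim that the two $\F_2$-solutions of $L(G(n))c=t$ differ by the all-ones vector, which follows from $\rank_{\F_2}L(G(n))=k-1$ and the fact that $(1,\ldots,1)$ lies in $\ker L(G(n))$; beyond this, the remaining computations amount to combining Lemmas~\ref{lem:oddphi} and \ref{lem:oddpsi} with the odd-graph property.
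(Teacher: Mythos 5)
Your proposal is correct and is essentially the paper's own argument: the paper proves this corollary in one line by ``applying Theorem~\ref{graphthm}(3) to Proposition~\ref{lem:6.3},'' and your write-up simply fills in the same steps --- oddness of $G(n)$ killing all nontrivial even partitions, solvability of $L(G)c=t$ via $\left(\frac{2}{n}\right)=1$ from $n\equiv1\pmod 8$, the solution pair $\{c,c+(1,\dots,1)\}$ giving $\{2d,2n/d\}$, and the sign analysis for $S^{(\psi)}$ using $p\equiv1\pmod4$. (Only trivial slips: the group law on $\Q(S,2)$ is multiplication modulo squares, not ``modulo $n$,'' and picking $d<\sqrt{2n}$ is a normalization of the pair $\{d,n/d\}$ rather than literally ``the smaller of the two.'')
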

For the $d$ given in Corollary~\ref{cor:d}, write $2d=\tau^2+\mu^2$. If $2d\in \tilde{S}^{(\varphi)}(E/\Q)$, we suppose $b$ satisfies the condition that $\cM_b$ defined by \eqref{eq:mb1} is locally solvable everywhere.
Suppose $c'=(c'_1,\cdots, c'_k)^T$ and $t'=(t'_1,\cdots t'_k)^T$ are given by
   \[ c'_j=\begin{cases} 1,\ &\text{if}\ p_j\mid b,\\ 0,\ &\text{if}\ p_j\nmid b; \end{cases} \qquad t'_j=\begin{cases} \Big[\dfrac{\tau+\mu i_{p_j}}{p_j}\Big],\ &\text{if}\ p_j\mid d,\\ \Big[\dfrac{2(\tau+\mu i_{p_j})}{p_j}\Big],\ &\text{if}\ p_j\mid \frac{n}{d}. \end{cases}   \]
By Proposition~\ref{prop:dn}, $L c'=t'$, i.e., $Lv=t'$ has a solution $v=c'$, which means that the summation of $t'_j$ must be zero in $\F_2$ by  Theorem~\ref{graphthm}(3).

\begin{defn} \label{defn:dn} Suppose $n$ is given such that $G(n)$ is an odd graph. For the unique factor  $d$ given in Corollary~\ref{cor:d}, write $2d=\tau^2+\mu^2$ and  $\frac{2n}{d}=\tau'^2+\mu'^2$, Let $i\in\Z/n\Z$ be defined by
  \begin{equation}i\equiv\frac{\tau}{\mu}\pmod d,\quad i\equiv\frac{\tau'}{\mu'}\pmod{\frac{n}{d}}.\end{equation}
We define
  \begin{equation} \label{eq:dn} \delta(n):=\left[\frac{\tau+\mu i}{n}\right]+  \left[\frac{2}{d}\right]
  \in\F_2.\end{equation}
\end{defn}

Then the following is a consequence of Proposition~\ref{prop:dn}.

\begin{cor}\label{cor:image}
If $G(n)$ is odd and $\delta(n)=1$, then
  \[\tilde S^{(\varphi)}(E/\Q)=\{1\}.\]
\end{cor}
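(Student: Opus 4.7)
The strategy is to exploit that $S^{(\varphi)}(E/\Q)=\{1,2d,2n/d,n\}$ is a Klein four group by Corollary~\ref{cor:d}, so $\tilde S^{(\varphi)}(E/\Q)$ is one of its five subgroups; to conclude it equals $\{1\}$ I need to exclude each of the three nontrivial elements $2d$, $2n/d$, and $n$.

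The principal case is $2d$, which is precisely what Proposition~\ref{prop:dn} prepares. That proposition says $2d\in \tilde S^{(\varphi)}(E/\Q)$ only if there is $b\in \Q(S,2)$ whose local parities satisfy the listed conditions. Writing $b=\prod p_j^{c'_j}$ with $c'\in\F_2^k$, these conditions become exactly the $\F_2$-linear system $L(G(n))c'=t'$, with $t'$ the vector from Definition~\ref{defn:dn}. Since $G(n)$ is odd, Theorem~\ref{graphthm}(3) tells us this is solvable if and only if $\sum_j t'_j=0$ in $\F_2$. Now pick the local square roots $i_{p_j}$ of $-1$ compatibly by the Chinese Remainder Theorem so that they assemble into the global $i\in \Z/n\Z$ of Definition~\ref{defn:dn}; splitting the sum according as $p_j\mid d$ or $p_j\mid n/d$ and using additivity of $[\tfrac{\cdot}{p}]$ collapses it to
\[
\sum_j t'_j \;=\; \left[\frac{\tau+\mu i}{n}\right] + \left[\frac{2}{n/d}\right].
\]
The hypothesis $n\equiv 1\pmod 8$ gives $[\tfrac{2}{n}]=0$, hence $[\tfrac{2}{n/d}]=[\tfrac{2}{d}]$, and the right-hand side is exactly $\delta(n)$. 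So $\delta(n)=1$ forces $2d\notin \tilde S^{(\varphi)}(E/\Q)$.

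The other two elements are excluded by re-running the same machinery. For $2n/d$ one interchanges $d\leftrightarrow n/d$ (and $(\tau,\mu)\leftrightarrow(\tau',\mu')$) throughout the computation of Section~3.2; the compatibility of the global $i$ on both pieces and the identity $[\tfrac{2}{d}]=[\tfrac{2}{n/d}]$ make the resulting invariant again $\delta(n)$. For $n$ itself, writing $n=a^2+b^2$ and choosing $(\sigma,\tau,\mu)=(2n,2b,a)$ in Lemma~\ref{lemma:descent} gives an analogous curve $\cM_b$ whose local analysis produces yet another $\F_2$-linear system governed by the same invariant. The main obstacle is the clean identification $\sum_j t'_j = \delta(n)$: the local $i_{p_j}$ are only defined up to sign and must be pinned down coherently to match the global $i$ in Definition~\ref{defn:dn}, and one must carefully track how the quadratic residue symbols on the $p\mid d$ side combine with those on the $p\mid n/d$ side into a single symbol at $n$. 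Once that identification is in place, the symmetric treatment of $2n/d$ and $n$ is essentially formal, and all three nontrivial elements of $S^{(\varphi)}(E/\Q)$ are eliminated.
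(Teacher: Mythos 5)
Your treatment of the element $2d$ is correct and coincides with the paper's own analysis: Proposition~\ref{prop:dn} turns $2d\in\tilde S^{(\varphi)}(E/\Q)$ into solvability of $L(G(n))c'=t'$, Theorem~\ref{graphthm}(3) converts that into $\sum_j t'_j=0$, and gluing the local $i_{p_j}$ into the global $i$ of Definition~\ref{defn:dn} identifies $\sum_j t'_j$ with $\delta(n)$ (using $\left[\frac{2}{d}\right]=\left[\frac{2}{n/d}\right]$, valid since $n\equiv1\pmod 8$). Your exclusion of $2n/d$ is also defensible: it amounts to the invariance of $\delta(n)$ under $d\leftrightarrow n/d$, which is precisely what the paper's Remark following this corollary establishes --- a genuine computation, not a one-line symmetry, but true.

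The gap is the element $n$. You assert that taking $n=a^2+b^2$ and $(\sigma,\tau,\mu)=(2n,2b,a)$ in Lemma~\ref{lemma:descent} produces ``yet another $\F_2$-linear system governed by the same invariant'' and that this step is ``essentially formal.'' It is neither done nor formal, and it is where your route breaks down. The local computations of Section 3.2 are specific to the class $2d$ with $d$ odd, splitting the primes of $n$ into the two regimes $p\mid d$ and $p\mid n/d$; for the class $n$ every prime falls into a single regime, the relevant datum is a representation of $n$ as a sum of two squares that has no a priori relation to $(\tau,\mu,\tau',\mu')$ (for composite $n$ such representations are far from unique), and the necessary condition one extracts is a symbol of the shape $\left[\frac{a+2bi}{n}\right]$, which you would still have to prove is nonzero whenever $\delta(n)=1$. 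Nothing in the paper supplies that, and it is not even clear it holds: the true obstruction for $n$ could sit at the prime $2$, which an odd-prime linear system does not detect. The paper sidesteps all of this with the one idea your proposal is missing: Cassels' skew-symmetric pairing on $\Sha$ forces $\lambda-\lambda^*=\dim_{\F_2}S^{(\varphi)}(E/\Q)-\dim_{\F_2}\tilde S^{(\varphi)}(E/\Q)$ to be even. Since $\lambda=2$ and the exclusion of $2d$ alone gives $\lambda^*\le 1$, parity forces $\lambda^*=0$; the elements $2n/d$ and $n$ never need to be examined.
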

\begin{proof}
Let $\lambda^*$ be the $\F_2$-rank of $\tilde S^{(\varphi)}(E/\Q)$, $\lambda$ be the $\F_2$-rank of $S^{(\varphi)}(E/\Q)$, then $\lambda=2$. The existence of the Cassels' skew-symmetric bilinear form on $\Sha$ implies that the difference $\lambda-\lambda^*$ is even.

By the above analysis, $\delta(n)=\sum\limits_j t'_j\neq 0$, thus $2d\notin\tilde S^{(\varphi)}(E/\Q)$,
we have $\lambda^*<\lambda$, $\lambda^*=0$.
\end{proof}

\begin{remark}
If we replace $d$ by $\frac{n}{d}$ in the definition, $\delta(n)$ is invariant. Indeed, $[\frac{2}{d}]=[\frac{2}{n/d}]$. For the other term,
  \[\left[\frac{\tau+\mu i}{n}\right]=\left[\frac{\tau+\mu i}{d}\right]+\left[\frac{\tau+\mu i'}{n/d}\right]\]
where $i\equiv \tau/\mu\pmod d$, $i'\equiv\tau'/\mu'\pmod{n/d}$. Let
$u=(\tau\tau'-\mu\mu')/2$, $v=(\tau\mu'-\mu\tau')/2$, then
  \[\begin{split}
    u+vi=&(\tau+\mu i)(\tau'+\mu' i)/2 \equiv \tau(\tau'+\mu'\cdot\frac{\tau}{\mu}) \\ \equiv& \tau\mu(\tau'\mu+\tau\mu')/\mu^2 \equiv(\tau+\mu)^2/\mu^2\cdot v/2\pmod d.
  \end{split}\]
Similarly, $u+vi'\equiv (\tau'+\mu')^2/\mu'^2\cdot v/2\pmod{(n/d)}$.
If we interchange $d$ and $n/d$, $\delta(n)$ will differ
  \[\begin{split}&\left[\frac{\tau+\mu i}{d}\right]+\left[\frac{\tau+\mu
    i'}{n/d}\right]+\left[\frac{\tau'+\mu' i'}{n/d}\right]+\left[\frac{\tau'+\mu' i}{d}\right]\\
    =&\left[\frac{2(u+vi)}{d}\right]+\left[\frac{2(u+vi')}{n/d}\right]=\bigg[\frac{v}{d}\bigg]
    +\left[\frac{v}{n/d}\right]\\
    =&\left[\frac{v}{n}\right]=\left[\frac{n}{v}\right]=0\in\F_2.
    \end{split}\]
Thus $\delta(n)$ does not change, which implies that $\delta(n)$
does not depend on the choice of $d,\tau,\mu$ and only depend on
$n$.
\end{remark}
\subsection{Proof of the main result.}
\begin{proof}[Proof of Theorem~\ref{maintheo}]
We shall use the fundamental exact sequence \eqref{eq:fun} and the commutative diagram in \S\ref{sec:descent} frequently.

Since $E(\Q)_{\tor}\cap\psi E'(\Q)=\{O\}$ and $\# E(\Q)_{\tor}=4$, $\# E(\Q)/\psi E'(\Q)\geq4$. Since $G(n)$ is odd, $\# S^{(\psi)}(E'/\Q)=4$ and  $\# E(\Q)/\psi E'(\Q)=4$, by \eqref{eq:fun}, $\Sha(E'/\Q)[\psi]=0$. Apparently $\tilde S^{(\psi)}(E'/\Q)\supseteq E(\Q)/\psi E'(\Q)$ and thus $\#\tilde S^{(\psi)}(E'/\Q)=4$.

By Corollary~\ref{cor:image}, $\tilde{S}^{(\varphi)}(E/\Q)=\{1\}$, then
$\# E'(\Q)/\varphi E(\Q)=1$. The facts $\# E(\Q)/\psi E'(\Q)=4$ and
$E(\Q)_{\tor}\cong (\Z/2\Z)^2$ imply that $\#E(\Q)/2 E(\Q)=4$ and
  \[ \rank_\Z E(\Q)=\rank_\Z E'(\Q)=0. \]
From $\Sha(E'/\Q)[\psi]=E'(\Q)/\varphi E(\Q)=0$, the diagram tells us that
  \[ \Sha(E/\Q)[2]\cong \Sha(E/\Q)[\varphi]\cong S^{(\varphi)}(E/\Q) \cong\Z/2\Z\times\Z/2\Z, \]
and \eqref{eq:4} tells us that
  \[ \Sha(E'/\Q)[2]\cong\Sha(E'/\Q)[\psi]\cong 0.  \]
Hence $\Sha(E'/\Q)[2^\infty]=0$ and $
\Sha(E'/\Q)[2^k \psi]=0$. By the exact sequence
  \[0\ra\Sha(E/\Q)[\varphi]\ra\Sha(E/\Q)[2^k]\ra\Sha(E'/\Q)[2^{k-1} \psi],\]
we have for every $k\in \N_{+}$,
  \[\Sha(E/\Q)[2^k]\cong\Sha(E/\Q)[\varphi]\cong(\Z/2\Z)^2,\]
and thus $\Sha(E/\Q)[2^\infty]\cong(\Z/2\Z)^2$.
\end{proof}
\begin{proof}[Proof of Corollary~\ref{coro:lt}]
In this case, $d=1$ and $\tau=\mu=1$,
$\delta(n)=\Big[\frac{1+\sqrt{-1}}{n}\Big]$, thus the result
follows.
\end{proof}
\vspace{0.4cm}

\end{document}